\setlist[description]{leftmargin=\parindent,labelindent=\parindent}
\patchcmd{\@maketitle}{center}{flushleft}{}{}
\patchcmd{\@maketitle}{center}{flushleft}{}{}
\patchcmd{\@maketitle}{\LARGE}{\Huge\sf}{}{}
\def\maketitle{{%
  
  \AB@maketitle}}
\theoremstyle{definition}
\newtheorem{construction}{Construction}[section]
\theoremstyle{plain}
\newtheorem{conjecture}[construction]{Conjecture}
\newtheorem{theorem}[construction]{Theorem}
\newtheorem{proposition}[construction]{Proposition}
\theoremstyle{remark}
\def \cF {{\cal F}}
\def \cH {{\cal H}}
\begin{document}
\title{\textit{\textbf{\boldmath Complex uniformly resolvable decompositions of $K_v$ }}}

\author[1]{\textsf{\textbf{Csilla Bujt\' as}}%
\thanks{Supported by the Slovenian Research Agency under the project N1-0108}$^,\!\! $
}
\author[2]{\textsf{\textbf{Mario Gionfriddo}}%
}
\author[2]{\textsf{\textbf{Elena Guardo}}%
	}
\author[$\!\! $ ]{\textsf{\textbf{\framebox{Lorenzo Milazzo}}}%
	}
\author[2]{\\\textsf{\textbf{Salvatore Milici}}%
	\thanks{Supported by MIUR and I.N.D.A.M. (G.N.S.A.G.A.), Italy and  by  Universit\`a degli Studi di
Catania, ``Piano della Ricerca 2016/2018 Linea di intervento 2'' }$^,\!\! $
}
\author[4]{\textsf{\textbf{Zsolt Tuza}}%
\thanks{Supported by the National Research,
 Development and Innovation Office -- NKFIH under the grant SNN 116095.}$^,
  $\thanks{corresponding author}$^{,3,} $}

\affil[1]{ \small Faculty of Mathematics and Physics, University of Ljubljana, Jadranska 19, Ljubljana 1000, Slovenia; \
	 e-mail: bujtas@fmf.uni-lj.si
}
\affil[2]{ \small Dipartimento di Matematica e Informatica,
Universit\`a di Catania,
 Viale A. Doria, 6, \hfill\break 95125 - Catania,  Italia; \
 e-mail: \{gionfriddo,guardo,milici\}@dmi.unict.it
}
\affil[3]{ \small Alfr\'ed R\'enyi Institute of Mathematics,  1053 Buda\-pest, Re\'altanoda u.~13--15, Hungary
\hfill\break $^4$ \small Department of Computer Science and Systems Technology, University of Pannonia,
 \hfill\break 8200 Veszpr\'em, Egyetem u.~10, Hungary; \
	 e-mail: tuza@dcs.uni-pannon.hu
}

\date{}

\maketitle

\begin{center}
\large {\vspace{-5ex} \em We dedicate this paper to the good friend and colleague Lorenzo Milazzo\\
who passed away in March 2019.}
\end{center}

\bigskip

\begin{abstract}
In this paper we consider the complex uniformly resolvable decompositions of the complete graph $K_v$ into subgraphs such that each resolution class contains only blocks isomorphic to the same graph from a given set $\cH$. We completely determine the spectrum for the cases
$\cH =\{K_2, P_3, K_3\}$, $\cH =\{P_4, C_4\}$, and $\cH =\{K_2, P_4, C_4\}$.
\end{abstract}

\bigskip

\noindent\textbf{Keywords:} Resolvable decomposition; complex uniformly resolvable decomposition; path; cycle.

\medskip
\noindent\textbf{AMS classification}: {05C51, 05C38, 05C70}.\\

\bigskip

\section{Introduction and definitions}\label{introduzione}

Given a set $\cH$ of  pairwise non-isomorphic graphs, an \emph{$\cH$-decomposition} (or  \emph{$\cH$-design})
of a graph $G$ is a decomposition of the edge set of $G$ into
subgraphs (called \emph{blocks}) isomorphic to some element of $\cH$. An $\cH$-\emph{factor} of $G$ is a spanning subgraph of $G$
which is a vertex-disjoint union of some copies of graphs belonging to $\cH$.
If $\cH= \{H\}$, we will briefly speak of an $H$-factor. 
An $\cH$-decomposition of $G$ is \emph{resolvable} if its blocks can be partitioned into $\cH$-factors ($\cH$-\emph{factorization} or resolution of $G$). 
An $\cH$-factor in an $\cH$-factorization is referred to as a \emph{parallel class}.
Note that the parallel classes are mutually edge-disjoint, by definition.

An $\cH$-factorization $\cal F$ of $G$ is called \emph{uniform} if  each factor of ${\cal F}$ is an $H$-factor for some graph $H \in {\cal H}$.
A $K_2$-factorization of $G$ is known as a 1-\emph{factorization} and its factors are called 1-{\em factors}; it is well known that a 1-factorization of $K_v$  exists if and only if $v$ is even (\cite{Lu}).
If $\cH=\{F_1,\dots,F_k\}$ and $r_{i}\geq 0$ for $i=1,\dots,k$, we denote by $(F_1,\dots,F_k)$-URD$(v; r_{1},\dots,r_{k})$ a uniformly resolvable decomposition of the complete graph $K_v$ having exactly $r_i$ $F_{i}$-factors. 
A {\em complex} $(F_1,\dots,F_k)$-URD$(v; r_{1},\dots,r_{k})$ is a uniformly resolvable decomposition of the complete graph $K_v$ into $r_{1}+\dots+r_{k}$ parallel classes with the requirement that at least one parallel class is present for each $F_{i}\in\cH$, i.e., $r_{i}>0$ for $i=1,\dots, k$.

Recently, the existence problem for $\cH$-factorizations of $K_v$ has been studied and a lot of results have been obtained, especially on the following types of  uniformly resolvable ${\cal H}$-decompositions: for a set $\cH$ consisting of two complete graphs of orders at most five in \cite{DLD, R, SG, WG};
for a set $\cH$ of two or three paths on two, three, or four vertices in \cite{GM1,GM2, LMT}; for $\cH =\{P_3, K_3+e\}$ in \cite{GM}; for $\cH =\{K_3, K_{1,3}\}$ in \cite{KMT}; for $\cH =\{C_4, P_{3}\}$ in \cite{M}; for $\cH =\{K_3, P_{3}\}$ in \cite{MT}; for $1$-factors and $n$-stars in \cite{KKMT}; and  for $\cH =\{P_2, P_{3}, P_4\}$ in \cite{LMT}.
In connection with our current studies the following cases are most relevant:
  \begin{itemize}
    \item perfect matchings and parallel classes of triangles or 4-cycles
   ($\{K_2,K_3\}$ or $\{K_2,C_4\}$, Rees \cite{R});
    \item perfect matchings and parallel classes of 3-paths
   ($\{K_2,P_3\}$, Bermond et al.\ \cite{BHY}, Gionfriddo and Milici~\cite{GM1});
    \item parallel classes of 3-paths and triangles
   ($\{K_3,P_3\}$, Milici and Tuza \cite{MT}).
  \end{itemize}
In this paper we give a complete characterization of the spectrum (the set of all admissible combinations of the parameters) for the following two triplets of graphs and for the pair contained in one of them which is not covered by the cases known so far:
\begin{itemize}
   \item complex $\{K_2, P_3, K_3\}$-decompositions of order $v$ (Section \ref{sec:MPT}, Theorem \ref{theorem11});
   \item complex $\{K_2, P_4, C_4\}$-decompositions of order $v$ (Section \ref{sec:MPC},  Theorem \ref{theorem12});
   \item complex $\{P_4, C_4\}$-decompositions of order $v$ (Section \ref{sec:PC}, Theorem \ref{theorem13}).
 \end{itemize}
We summarize the formulation of those results in the concluding section, where a conjecture related to the method of ``metamorphosis'' of parallel classes is also raised.
We provide the basis for this approach by applying linear algebra in Section \ref{sec:2}.

\section{Local metamorphosis}
\label{sec:2}

In this section we prove three relations between uniform parallel classes of 4-cycles and 4-paths that will be used in the proofs of our main theorems. Before presenting the new statements, let us recall  the  Milici--Tuza--Wilson Lemma from \cite{MT}.

\begin{theorem}
\label{theorem1}\cite{MT}
The union of two 
parallel classes of\/ $3$-cycles of\/ $K_v$ can be decomposed into three
 parallel classes of\/ $P_3$.
\end{theorem}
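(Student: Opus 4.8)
The plan is to treat the two triangle-factors abstractly. Write $\mathcal{R}$ and $\mathcal{B}$ for the two parallel classes of $3$-cycles (``red'' and ``blue''), and set $G=\mathcal{R}\cup\mathcal{B}$. Since each class is a spanning set of vertex-disjoint triangles we must have $v\equiv 0\pmod 3$; each vertex lies in exactly one red and one blue triangle, and (the two classes being edge-disjoint) $G$ is $4$-regular with $2v$ edges. A $P_3$-factor uses $2v/3$ edges, so three of them use exactly $2v$ edges; moreover a degree count shows that in any decomposition of $G$ into three $P_3$-factors every vertex must be the centre of a path exactly once and an endpoint exactly twice. This degree bookkeeping is the target I want to realise.

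The key step is to produce a vertex colouring $\phi\colon V(K_v)\to\{1,2,3\}$ that is \emph{rainbow on every triangle}: the three vertices of each red triangle receive three distinct colours, and likewise for each blue triangle. I would obtain such a $\phi$ from an auxiliary bipartite multigraph $B$ whose two sides are the red triangles and the blue triangles, and in which each vertex $x$ of $K_v$ becomes an edge joining the red triangle through $x$ to the blue triangle through $x$. Then $B$ is $3$-regular (each triangle contains three vertices), so by K\"onig's edge-colouring theorem $B$ has a proper $3$-edge-colouring; reading this colouring back as a colouring of the vertices of $K_v$ yields exactly the desired $\phi$.

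Given $\phi$, I would split $G$ by colour pairs. For $\{i,j\}\subset\{1,2,3\}$ let $G_{ij}$ be the subgraph of $G$ formed by the edges joining a colour-$i$ vertex to a colour-$j$ vertex. Because every triangle is rainbow, each vertex of colour $i$ meets exactly one colour-$j$ neighbour in its red triangle and one in its blue triangle, so $G_{ij}$ is $2$-regular and bipartite, hence a disjoint union of even cycles. I would $2$-edge-colour each such cycle alternately with the labels $F_i$ and $F_j$, so that at every vertex of $G_{ij}$ one incident edge is labelled $F_i$ and the other $F_j$. Finally define $F_i$ to be the set of all edges labelled $F_i$ (coming from $G_{ij}$ and $G_{ik}$, where $\{i,j,k\}=\{1,2,3\}$).

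It then remains to check that each $F_i$ is a $P_3$-factor. A colour-$i$ vertex receives one $F_i$-labelled edge from $G_{ij}$ and one from $G_{ik}$, hence has degree $2$ in $F_i$; a vertex of colour $j\neq i$ has exactly one $F_i$-edge (from $G_{ij}$) and none elsewhere, hence degree $1$. Thus every component of $F_i$ is a path centred at a colour-$i$ vertex with one colour-$j$ and one colour-$k$ endpoint, i.e.\ a copy of $P_3$; since every vertex is covered, $F_i$ is a $P_3$-factor. As the three label classes partition $E(G)$, the factors $F_1,F_2,F_3$ give the required decomposition. I expect the only real obstacle to be the construction of the rainbow colouring $\phi$; once the K\"onig argument delivers it, the split into the $G_{ij}$ and the verification are routine.
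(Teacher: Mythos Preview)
The paper does not actually prove this theorem; it is quoted from \cite{MT} (and attributed independently to Wilson), so there is no in-paper argument to compare against. Your argument is correct. The $3$-regular bipartite incidence multigraph between red triangles and blue triangles is properly $3$-edge-colourable by K\"onig's theorem, and reading the edge colours back on $V(K_v)$ gives exactly the rainbow vertex colouring $\phi$ you need; edge-disjointness of the two classes guarantees that no two distinct vertices share both their red and blue triangle, so $B$ is even simple, though K\"onig would apply regardless. From there your analysis of $G_{ij}$ and the alternating labels is sound. The one step worth spelling out is why a component of $F_i$ cannot be a path longer than $P_3$: every edge of $F_i$ has exactly one endpoint of colour $i$, so two internal (degree-$2$, hence colour-$i$) vertices can never be adjacent in $F_i$. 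This is implicit in your remark that the centre is colour $i$ and the two endpoints are colours $j$ and $k$, but making it explicit closes the argument cleanly.
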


The next two results, Theorems~\ref{theorem2} and \ref{theorem3}, will directly imply Theorem~\ref{theorem4} which states a pure metamorphosis from $4$-cycles to $4$-paths.

\begin{theorem}
 \label{theorem2} The union of two parallel classes of\/ $C_4$ is decomposable into two parallel classes of\/ $P_4$ and one perfect matching.
\end{theorem}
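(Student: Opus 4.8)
The plan is to reduce the whole statement to a single vertex-colouring problem and then settle that problem by a short $\mathrm{GF}(2)$ linear-algebra argument. Write $v=4n$, so each class is a union of $n$ vertex-disjoint $4$-cycles; call the two classes $\cF$ and $\cG$. To convert a $C_4$-factor into a $P_4$-factor one must delete exactly one edge from each of its $4$-cycles: deleting none leaves a cycle, and deleting two cannot leave a single $P_4$. The edges deleted from $\cF$ then form a matching $M_1$ with $n$ edges, and those deleted from $\cG$ a matching $M_2$ with $n$ edges. Since $|M_1|+|M_2|=2n$ and these edges span at most $v=4n$ vertices, $M_1\cup M_2$ is a perfect matching precisely when $M_1$ and $M_2$ are vertex-disjoint. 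Thus it suffices to choose, in every $4$-cycle of $\cF\cup\cG$, one edge to delete so that the $2n$ deleted edges are pairwise vertex-disjoint.

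I would encode such a choice by a map $x\colon V\to\{0,1\}$, where $x_u=1$ signals that $u$ is covered by $M_1$ (the deleted edge of its $\cF$-cycle) and $x_u=0$ that it is covered by $M_2$. Inspecting one $4$-cycle shows that the admissible local configurations are exactly the four patterns having two $1$'s in consecutive positions, and that these are precisely the colourings in which each diagonal (pair of opposite vertices) of the $4$-cycle receives two different colours. Consequently the sought colouring exists if and only if the $\mathrm{GF}(2)$ linear system imposing $x_a+x_c=1$ and $x_b+x_d=1$ for every $4$-cycle $a\,b\,c\,d$ of $\cF\cup\cG$ is consistent.

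The crux, and the step I expect to be the main obstacle, is the consistency of this system; this is where linear algebra enters. Form the constraint graph $D$ on $V$ whose edges are all the diagonal pairs occurring above. The two diagonals of each $\cF$-cycle partition its four vertices, so over all $\cF$-cycles they assemble into a perfect matching $D_1$ of $V$; similarly $\cG$ produces a perfect matching $D_2$, and $D=D_1\cup D_2$. A union of two perfect matchings has every component either a single edge or an even cycle alternating between $D_1$ and $D_2$, so $D$ is bipartite; a proper $2$-colouring of $D$ is exactly a solution assigning different values to the ends of every edge, i.e.\ an assignment with $x_u+x_w=1$ for all $\{u,w\}\in D$. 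Hence the system is consistent. Any solution then fixes $M_1$ (the monochromatic $1$-edges of the $\cF$-cycles) and $M_2$ (the monochromatic $0$-edges of the $\cG$-cycles), and by construction these cover every vertex exactly once; deleting them leaves the two $P_4$-factors $\cF\setminus M_1$ and $\cG\setminus M_2$, while $M_1\cup M_2$ is the required perfect matching.
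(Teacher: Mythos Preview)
Your proof is correct and takes a genuinely different, more elementary route than the paper's. The paper assigns a $\mathrm{GF}(2)$ variable to each \emph{edge} of the two $C_4$-factors and writes an equation $x_{i_1}+x_{i_2}+x_{i_3}+x_{i_4}=1$ for every $4$-cycle and for the four edges at every vertex; consistency of this system is established by a parity count over arbitrary subcollections of equations, and a solution is then post-processed by simultaneous bit-flips along each $C_4$ to reach a ``basic'' solution with exactly one $1$ per cycle, which yields the perfect matching. You instead attach a variable to each \emph{vertex} and observe that the local constraint on every $4$-cycle (two consecutive $1$'s) is equivalent to the two diagonals being bichromatic; since the diagonals of a $C_4$-factor assemble into a perfect matching, the constraint graph is the union of two perfect matchings and hence bipartite, giving a solution immediately with no consistency computation and no switching step. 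Your argument is shorter and more transparent for this theorem; the paper's edge-variable framework, on the other hand, is what gets reused almost verbatim for the companion statement (a $C_4$-class together with a perfect matching decomposes into two $P_4$-classes), where the clean ``two diagonal matchings'' picture is not directly available.
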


 \begin{proof}
Let the vertices be $v_1,\dots,v_n$ where $n$ is a multiple of 4.
The union of two parallel classes of $C_4$ forms a 4-regular graph $G$ with $2n$ edges, say $e_1,\dots,e_{2n}$.
We associate a Boolean variable $x_i$ with each edge $e_i$ ($1\le i\le 2n$) and construct a system of linear equations over $GF(2)$, which
 has $\frac{3n}{2} -1$ equations over the $2n$ variables.
Let us set
 $$
   x_{i_1} + x_{i_2} + x_{i_3} + x_{i_4} = 1
     \qquad (\mbox{\rm mod } 2)
 $$
  for each 4-tuple of indices such that
 $e_{i_1},e_{i_2},e_{i_3},e_{i_4}$ are either the edges of
  a $C_4$ in a parallel class (call this a $C$-equation) or are
  the four edges incident with a vertex $v_i$ (a $V$-equation).
This gives $\frac 32 n$ equations, but the $V$-equation
 for $v_n$ can be omitted since the $n$ $V$-equations sum up
 to 0 (as each edge is counted twice in the total sum)
 and therefore the one for $v_n$ follows from the others.

We claim that this system of equations is contradiction-free
 over $GF(2)$.
To show this, we need to prove that if the left sides of a
 subcollection 
 $\mathcal{E}$ of the equations sum up to 0,
 then also the right sides have zero sum; that is, the number
  $|\mathcal{E}|$ of its equations is even.

Observe that each variable is present in precisely three equations:
 in one $C$-equation and two $V$-equations.
Hence, to have zero sum on the left side, any $x_i$ should either
 not appear in any equations of $\mathcal{E}$ or be present in
 precisely two.
This means one of the following two situations.
 \begin{description}

  \item[$(T1)$] If $(e_{i_1},e_{i_2},e_{i_3},e_{i_4})$ is a 4-cycle
   (in this cyclic order of edges)
   and its $C$-equation belongs to $\mathcal{E}$, then precisely two
   related $V$-equations must be present in $\mathcal{E}$, namely either
   those for the vertices $e_{i_1}\cap e_{i_2}$ and $e_{i_3}\cap e_{i_4}$
   or those for $e_{i_2}\cap e_{i_3}$ and $e_{i_1}\cap e_{i_4}$.

  \item[$(T2)$] If $(e_{i_1},e_{i_2},e_{i_3},e_{i_4})$ is a 4-cycle
   such that its $C$-equation does not belong
   to $\mathcal{E}$ but some $x_{i_j}$ ($1\le j\le 4$) is
   involved in $\mathcal{E}$, then all the four $V$-equations for
   $v_{i_1},v_{i_2},v_{i_3},v_{i_4}$ must be present in $\mathcal{E}$.
 \end{description}

In the first and second parallel class of 4-cycles, respectively,
 let us denote the number of cycles of type $(T1)$ by $a_1$ and $b_1$,
 and that of type $(T2)$ by $a_2$ and $b_2$.
Then the number of $V$-equations
 in $\mathcal{E}$ is equal to both $2a_1+4a_2$ and $2b_1+4b_2$,
 which is the same as the average of these two numbers.
Thus, the number $|\mathcal{E}|$ of equations is equal to
 $$
   a_1 + b_1 +  \frac 12 ((2a_1+4a_2) + (2b_1+4b_2))
     = 2 ( a_1+a_2 + b_1+b_2 )
 $$
 that is even, as needed.

Since the system of equations is non-contradictory, it has a solution
 $\xi\in\{0,1\}^{2n}$ over $GF(2)$.
We observe further that in any $C$-equation
 $x_{i_1} + x_{i_2} + x_{i_3} + x_{i_4} = 1$
 we may switch the values from $\xi(x_{i_j})$
  to $1-\xi(x_{i_j})$ simultaneously for all
 $1\le j\le 4$, and doing so the modified values remain a solution
 because the parities of sums in the $V$-equations do not change either.
In this way, we can transform $\xi$ to a basic solution $\xi_0$
 in which every $C$-equation contains precisely one 1 and three 0s.
Since each $V$-equation contains precisely two or zero variables
 from each $C$-equation, it follows that in the basic solution $\xi_0$
 each $V$-equation, too, contains precisely one $1$ and three $0$s.

As a consequence, the variables which have $x_i=1$ in the basic
 solution define a perfect matching in $G$
 (since at most two 1s may occur at each vertex, and then the
 corresponding $V$-equation implies that there is precisely one).
Moreover, removing those edges from $G$, each cycle of each parallel
 class becomes a $P_4$.
In this way we obtain two parallel classes of $P_4$, and one further
 class which is a perfect matching.
\end{proof}

\begin{theorem}
 \label{theorem3}The edge-disjoint union of a perfect matching and a parallel class
  of\/ $C_4$ is decomposable into two parallel classes of\/ $P_4$.
\end{theorem}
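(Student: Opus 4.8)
The plan is to mimic the linear-algebra method of Theorem~\ref{theorem2}, but working with a vertex $2$-colouring rather than an edge labelling. Write $G$ for the edge-disjoint union of the perfect matching $M$ and the parallel class $\mathcal C$ of $4$-cycles; then $G$ is $3$-regular on $v$ vertices with $v\equiv 0\pmod 4$. In any decomposition of $G$ into two $P_4$-factors each vertex has degree $3=1+2$, so it is an endpoint of its path in one factor and an interior vertex in the other. Since every interior vertex of a $P_4$ is incident with the central edge of that path, the central edges of all paths (over both factors) cover each vertex exactly once, i.e.\ they form a perfect matching $M^\ast$ of $G$. I would look for a solution in which $M^\ast$ consists of one pair of opposite edges from each $4$-cycle of $\mathcal C$; then $G-M^\ast$ is the union of the remaining opposite pairs together with all of $M$, and at every vertex it uses one cycle-edge and one matching-edge, so $G-M^\ast$ is a disjoint union of even cycles.

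Next I would encode the configuration as a $GF(2)$ system in vertex variables $h(w)\in\{0,1\}$, where $h(w)$ records in which factor $w$ is interior. For a $4$-cycle $(a,b,c,d)\in\mathcal C$ I impose the two diagonal equations
$$
   h(a)+h(c)=1,\qquad h(b)+h(d)=1 \pmod 2 ,
$$
and for each matching edge $uu'\in M$ the equation $h(u)+h(u')=1$. The point of the diagonal equations is that a $4$-cycle has both diagonals bichromatic precisely when it is coloured in the pattern $XXYY$ around the cycle; in that case its two monochromatic edges form an opposite pair, which will be the pair put into $M^\ast$, while the two bichromatic edges join it to $G-M^\ast$. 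Thus any solution $h$ determines $M^\ast$ and makes every edge of $G-M^\ast$ bichromatic. Taking any proper $2$-edge-colouring $N_0,N_1$ of the even cycles $G-M^\ast$ and declaring factor $i$ to be $N_i$ together with those $M^\ast$-edges that are monochromatic of colour $i$, one checks that each central edge $xy$ (with $h(x)=h(y)=i$) extends through the unique $N_i$-edges at $x$ and $y$ to endpoints of colour $1-i$, producing exactly a $P_4$; so both factors are genuine $P_4$-factors.

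It remains to prove that the system is solvable, and this is the step that needs the parity argument of Theorem~\ref{theorem2}. Here it is particularly transparent: each variable $h(w)$ occurs in exactly two equations, namely the single diagonal equation of the $4$-cycle containing $w$ and the single matching equation of the edge of $M$ at $w$. Hence the bipartite incidence graph whose nodes are the equations and whose edges are the variables is $2$-regular, so it is a disjoint union of even cycles. If the left-hand sides of a subcollection $\mathcal E$ of equations sum to $0$, then every variable occurs in an even number of members of $\mathcal E$, which forces $\mathcal E$ to be a union of these even cycles; consequently $|\mathcal E|$ is even and the right-hand sides sum to $0$ as well. Therefore the system is non-contradictory, a solution $h$ exists, and the construction above completes the decomposition.

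I expect the main obstacle to be the reduction itself---recognising that a suitable decomposition can be sought with the central edges taken \emph{inside} the $4$-cycles, and that the resulting requirement ``pattern $XXYY$ on every cycle, bichromatic on $M$'' is captured exactly by the linear diagonal and matching equations; verifying that a solution really yields $P_4$'s rather than longer paths or cycles rests on this $XXYY$ pattern. Once the encoding is found, the consistency proof is routine precisely because the incidence graph is $2$-regular and bipartite.
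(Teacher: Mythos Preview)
Your argument is correct, and it takes a genuinely different route from the paper. The paper introduces Boolean variables on the \emph{cycle edges} and writes, for each $4$-cycle and for each matching edge, a four-term equation $x_{i_1}+x_{i_2}+x_{i_3}+x_{i_4}=1$; consistency is established via a $(T1)/(T2)$ case analysis analogous to that of Theorem~\ref{theorem2}, and after solving one must still pass to a ``basic solution'' with exactly one $1$ per $C$-equation before the edges with value $1$ can be read off as the link between the matching and a $P_4$-factor. Your encoding uses \emph{vertex} variables and only two-term equations $h(a)+h(c)=1$, $h(b)+h(d)=1$, $h(u)+h(u')=1$; since every variable sits in exactly one diagonal equation and one matching equation, the equation--variable incidence graph is $2$-regular and bipartite, and consistency is immediate. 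No normalisation step is needed: any solution already forces the $XXYY$ pattern on each cycle, so the monochromatic opposite pair gives the central edges directly, and the verification that each factor is a $P_4$-factor follows from the degree pattern $(2,2,1,1)$ along each path. The paper's approach has the advantage of reusing the machinery of Theorem~\ref{theorem2} verbatim; yours is shorter, avoids the basic-solution manoeuvre, and makes the underlying bipartite structure completely transparent.
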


\begin{proof}
We apply several ideas from the previous proof, but in a somewhat
 different way.
We now introduce Boolean variables $x_1,\dots,x_n$ for the edges $e_1,\dots,e_n$
 of the 4-cycles only; but still there will be two kinds of linear
 equations, namely $n/4$ of them for 4-cycles (called $C$-equations)
 and $n/2$ of them for the edges of the perfect matching ($M$-equations).
They are of the same form as before:
 $$
   x_{i_1} + x_{i_2} + x_{i_3} + x_{i_4} = 1
     \qquad (\mbox{\rm mod } 2)
 $$
The $C$-equations require $e_{i_1},e_{i_2},e_{i_3},e_{i_4}$ to be
 the edges of a $C_4$ in the parallel class.
The $M$-equations take $e_{i_1},e_{i_2},e_{i_3},e_{i_4}$ as the
 four edges incident with a matching edge.
If a matching edge is the diagonal of a 4-cycle, then their
 equations coincide; and if a matching edge shares just one
 vertex with a 4-cycle then the $M$-equation and the
 $C$-equation share two variables which correspond to
 consecutive edges on the cycle.
Further, we recall that the matching is edge-disjoint from
 the cycles, therefore each variable associated with a
 cycle-edge occurs in precisely two distinct $M$-equations.

These facts imply that only two types of $C$-equations can
 occur in a subcollection $\mathcal{E}$ of equations
 whose left sides sum up to 0 over $GF(2)$.
 \begin{description}

  \item[$(T1)$] If $(e_{i_1},e_{i_2},e_{i_3},e_{i_4})$ is a 4-cycle
   and its $C$-equation belongs to $\mathcal{E}$, then the
   corresponding $C_4$ has precisely two (antipodal) vertices
   for which the $M$-equations of the incident matching edges are present in $\mathcal{E}$.
  (At the moment it is unimportant whether those two
   vertices form a matching edge or not.)

  \item[$(T2)$] If $(e_{i_1},e_{i_2},e_{i_3},e_{i_4})$ is a 4-cycle
    whose $C$-equation does not belong
   to $\mathcal{E}$ but some $x_{i_j}$ ($1\le j\le 4$) is
   involved in $\mathcal{E}$, then each $M$-equation belonging to a
   matching edge incident with some of the four vertices
   $v_{i_1},v_{i_2},v_{i_3},v_{i_4}$ is present in $\mathcal{E}$.
  (It is again unimportant whether one or both or none of the diagonals
   of the $C_4$ in question is a matching edge.)
 \end{description}

Let now $a_1$ and $a_2$ denote the number of cycles with type $(T1)$
 and type $(T2)$, respectively.
By what has been said, the number of vertices requiring an
 $M$-equation is equal to $2a_1+4a_2$.
Since each of those equations is now counted at both ends of the
 corresponding matching edge, we obtain that $\mathcal{E}$ contains
 exactly $a_1+2a_2$ $M$-equations; moreover it has $a_1$ $C$-equations,
 by definition.
Thus, the number $|\mathcal{E}|$ of equations is equal to
 $a_1 + (a_1+2a_2) = 2 ( a_1+a_2 )$
 which is even.
Thus, if the left sides in $\mathcal{E}$ sum up to zero, then
 also the right sides have sum 0 in $GF(2)$. It proves that
 the system of the $3n/4$ equations is contradiction-free and has a solution
 $\xi\in\{0,1\}^n$ over $GF(2)$.
 
Now, we observe that in any $C$-equation
 $x_{i_1} + x_{i_2} + x_{i_3} + x_{i_4} = 1$ we may switch the
 values from $\xi(x_{i_j})$ to $1-\xi(x_{i_j})$ simultaneously for all
 $1\le j\le 4$. Doing so, the modified values remain a solution
 as the parities of sums in the $M$-equations do not change either.
In this way we can transform $\xi$ to a basic solution $\xi_0$
 in which every $C$-equation contains precisely one $1$ and three $0$s.
Since each $M$-equation has precisely two or four or zero variables
 from any $C$-equation, it follows that in the basic solution $\xi_0$
 each $M$-equation, too, contains precisely one $1$ and three $0$s.

As a consequence, the variables (cycle-edges) which have $x_i=1$ in the basic
 solution establish a pairing between the edges of the
 original matching.
Hence the set $I=\{e_i : \xi_0(x_i)=1\}$ together with the edges of the given matching factor determines a $P_4$-factor.
Moreover, removing the edges of $I$ from the $4$-cycles, we obtain another parallel class of paths $P_4$.
 \end{proof}

These two types of metamorphosis can be combined to obtain the following third one.

\begin{theorem}
 \label{theorem4} The union of three parallel classes of\/ $C_4$ is
  decomposable into four parallel classes of\/ $P_4$.
\end{theorem}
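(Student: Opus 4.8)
The plan is to derive Theorem~\ref{theorem4} as an immediate composition of Theorems~\ref{theorem2} and \ref{theorem3}, with essentially no new combinatorial work. Label the three given parallel classes of $C_4$ as $\mathcal{C}_1,\mathcal{C}_2,\mathcal{C}_3$. Since they are parallel classes of a single decomposition of $K_v$, they are pairwise edge-disjoint; this edge-disjointness is the one hypothesis I will need to track carefully, as it is what licenses the second application below.

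First I would apply Theorem~\ref{theorem2} to the union $\mathcal{C}_1\cup\mathcal{C}_2$. This yields two parallel classes of $P_4$ together with one perfect matching $M$, where every edge of $M$ is an edge of $\mathcal{C}_1\cup\mathcal{C}_2$. That already accounts for two of the four desired $P_4$-classes and, crucially, produces a spanning $1$-factor $M$ to feed into the next step.

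Next I would observe that $M$ is edge-disjoint from $\mathcal{C}_3$: the edges of $M$ lie in $\mathcal{C}_1\cup\mathcal{C}_2$, which is edge-disjoint from $\mathcal{C}_3$ by the remark above. Hence the hypothesis of Theorem~\ref{theorem3} is met for the pair $(M,\mathcal{C}_3)$, and applying it decomposes $M\cup\mathcal{C}_3$ into two further parallel classes of $P_4$. Collecting the outputs of the two applications gives $2+2=4$ parallel classes of $P_4$, and since the three original $C_4$-classes are reused exactly once each (the edges of $M$ being precisely the ones borrowed from $\mathcal{C}_1\cup\mathcal{C}_2$ and returned through $\mathcal{C}_3$), these four classes partition the edge set of $\mathcal{C}_1\cup\mathcal{C}_2\cup\mathcal{C}_3$.

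The only point requiring attention is the bookkeeping of edges: I must confirm that the matching $M$ extracted in the first step is genuinely removed from $\mathcal{C}_1\cup\mathcal{C}_2$ (so those edges are not double-counted) and then genuinely consumed in the second step, so that the total edge set of the four $P_4$-factors coincides with that of the three $C_4$-factors. Given the exact statements of Theorems~\ref{theorem2} and \ref{theorem3}, this accounting is routine, so I expect no substantive obstacle; the content of the result lies entirely in the two linear-algebraic metamorphoses already established.
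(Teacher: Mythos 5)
Your proposal is correct and follows exactly the paper's own proof: apply Theorem~\ref{theorem2} to the first two $C_4$-classes to obtain two $P_4$-classes and a perfect matching, then apply Theorem~\ref{theorem3} to that matching together with the third $C_4$-class. Your explicit check that the matching is edge-disjoint from the third class (needed as the hypothesis of Theorem~\ref{theorem3}) is a detail the paper leaves implicit, but it is the same argument.
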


 \begin{proof}

Applying Theorem \ref{theorem2} we transform the union of the first and the second $C_4$-class into two $P_4$-classes and a perfect matching.
After that we combine the third $C_4$-class with the perfect matching just obtained into two further $P_4$-classes, by Theorem \ref{theorem3}.
 \end{proof}

\section{The spectrum for $\cH=\{K_2,P_3,K_3\}$}
\label{sec:MPT}

In this section we consider complex uniformly resolvable decompositions of the complete graph $K_v$ into $m$ classes containing only copies of 1-factors (perfect matchings), $p$ classes containing only copies of paths $P_3$ and  $t$ classes containing only copies of triangles $K_3$.
 The current problem is to determine the set of feasible
 triples $(m,p,t)$ such that $m\cdot p\cdot t \ne 0$, for which
 there exists a complex $(K_2,P_3,K_3)$-URD$(v;m,p,t)$.
A little more than that, for $v=6$ and $v=12$ we shall list
 also those feasible $(m,p,t)$ in which $m$ or $p$ or $t$ is zero.

\begin{theorem}
 \label{theorem11}
 The necessary and sufficient conditions for the existence of a
  complex\/ $(K_2,P_3,K_3)$-URD\/$(v;m,p,t)$ are:
   \begin{description}
   \item[$(i)$] $v\ge 12$ and\/ $v$ is a multiple of\/ $6$;
   \item[$(ii)$] $3m + 4p + 6t = 3v-3$.
  \end{description}
Moreover, the parameters\/ $m,p,t$ are in the following ranges:
   \begin{description}
   \item[$(iii)$] $1\le m\le v-7$ and\/ $m$ is odd,

    \qquad \ $3\le p\le 3\cdot\lfloor \frac v4 -1 \rfloor$,

    \qquad \ $1\le t\le \lfloor \frac v2 -3 \rfloor$.
  \end{description}
\end{theorem}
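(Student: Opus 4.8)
\emph{Necessity.} Counting edges is immediate: a $1$-factor, a $P_3$-factor and a $K_3$-factor of $K_v$ use $v/2$, $2v/3$ and $v$ edges respectively, so $\tfrac v2\,m+\tfrac{2v}{3}\,p+v\,t=\binom v2$, which after clearing denominators is exactly $(ii)$. The presence of a $P_3$- or a $K_3$-factor forces $3\mid v$, that of a $1$-factor forces $2\mid v$, hence $6\mid v$; reducing $(ii)$ modulo $3$ gives $3\mid p$, and modulo $2$ gives that $m$ is odd (as $v$ is even). Substituting the minimal admissible values $p=3,t=1$, resp.\ $m=1,t=1$, resp.\ $m=1,p=3$ into $(ii)$ and solving yields the three upper bounds $m\le v-7$, $p\le 3\lfloor v/4-1\rfloor$, $t\le v/2-3$ of $(iii)$ (for the middle bound, $p$ must be the largest multiple of $3$ not exceeding $3(v-4)/4$). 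Finally, for $m,p,t\ge1$ the smallest possible right-hand side of $(ii)$ is $3\cdot1+4\cdot3+6\cdot1=21$, which exceeds $3v-3=15$ when $v=6$; thus no complex decomposition exists for $v=6$ and we get $v\ge12$, completing $(i)$.

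\emph{Sufficiency.} Fix an admissible triple $(m,p,t)$ and put $t_0:=t+\tfrac{2p}{3}$. By $(ii)$ one has $m+2t_0=v-1$ with $m$ odd, $m\ge1$, and $t_0\ge3$, so $(m,t_0)$ is a legitimate parameter pair for a uniformly resolvable decomposition of $K_v$ into $m$ one-factors and $t_0$ triangle-factors. The plan is to realise such a decomposition and then trade triangles for paths. Indeed, by Theorem~\ref{theorem1} any two parallel classes of $K_3$ can be re-decomposed into three parallel classes of $P_3$; since all the factors are mutually edge-disjoint, applying this metamorphosis to $\tfrac p3$ disjoint pairs among the $t_0$ triangle-factors turns $2\cdot\tfrac p3=\tfrac{2p}{3}$ of them into $3\cdot\tfrac p3=p$ path-factors and leaves $t_0-\tfrac{2p}{3}=t$ triangle-factors untouched, the $m$ one-factors being kept. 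The result is precisely a complex $(K_2,P_3,K_3)$-URD$(v;m,p,t)$. As this construction produces each admissible triple directly from its own base, it shows simultaneously that the whole region delimited by $(iii)$ is attained.

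\emph{The base decompositions and the main obstacle.} Everything therefore reduces to the existence of the auxiliary $(K_2,K_3)$-URD$(v;m,t_0)$, i.e.\ a decomposition of $K_v$ into one-factors and triangle-factors, which is exactly the spectrum settled by Rees~\cite{R}. The work consists in checking that for every $v\equiv0\pmod6$ with $v\ge12$ and every odd $m$ in $1\le m\le v-7$ the pair $(m,t_0)$ with $t_0=(v-1-m)/2$ lies in that spectrum; here the delicate point is that \emph{all} intermediate (sub-maximal) numbers of triangle-factors must be available, while the maximal value $t_0=(v-2)/2$ corresponds to a nearly Kirkman triple system and is available precisely for $v\ge18$. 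The only genuine exception is thus $m=1$, $v=12$, where $t_0=5$ would require the non-existent $\mathrm{NKTS}(12)$. Consequently the reduction covers every admissible triple except the two targets $(1,3,3)$ and $(1,6,1)$ for $v=12$ (both of which force $t_0=5$), and these will be produced by an explicit ad hoc decomposition of $K_{12}$ using a single one-factor. I expect these few hand-built cases, together with the careful verification that the remaining $(m,t_0)$ all fall inside Rees' spectrum, to be the main obstacle; the core reduction itself is a direct consequence of Theorem~\ref{theorem1}.
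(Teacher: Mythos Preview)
Your proposal is correct and follows essentially the same route as the paper: necessity via edge counting and parity, sufficiency via Rees' $(K_2,K_3)$-URD spectrum followed by the triangle-to-path metamorphosis of Theorem~\ref{theorem1}, with the $v=12$ cases $(1,3,3)$ and $(1,6,1)$ singled out as the only ones requiring an ad hoc construction. The paper handles these exactly as you anticipate: it gives an explicit $(1,3,3)$ decomposition of $K_{12}$ in Proposition~\ref{lemmaF2} and then obtains $(1,6,1)$ from it via Theorem~\ref{theorem1}, so the only thing your write-up still owes the reader is that one explicit list of parallel classes.
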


\begin{proof}
We first prove that the conditions are necessary. Divisibility of $v$ by 6 is immediately seen, due to the presence of $K_3$-classes and $1$-factors.
We observe further that the number of edges in a parallel class is $v$ for a triangle-class, $2v/3$ for a $P_3$-class and $v/2$ for a matching.
Thus, in any $(K_2,P_3,K_3)$-URD$(v;m,p,t)$ we must have
 $$
   \frac{mv}{2} + \frac{2pv}{3} + tv = {v\choose 2} .
 $$
Dividing it by $v/6$, the assertion of $(ii)$ follows.

As  $(ii)$ implies,  $p$ is a multiple of 3, say $p=3x$. Then we obtain
$$ m+4x+2t=v-1$$
and also conclude that $m$ is odd.
Since $m\ge 1$, $t\ge 1$, and $x\ge 1$, 
 this equation yields
  $$
    m \leq v-7 , \qquad x \leq v/4 - 1 , \qquad t \leq v/2 - 3,
  $$
 implying the conditions listed in $(iii)$, and the
  first one also excludes $n=6$.
This completes the proof that the conditions $(i)$--$(iii)$ are necessary.

To prove the sufficiency of $(i)$--$(ii)$, we consider $v\ge 18$ first.
Since $v$ is a multiple of 6 according to $(i)$, there exists a Nearly Kirkman Triple System of order $v$, which means $m=1$ perfect
 matching and $t=\frac n2 -1$ parallel classes of triangles.
More generally, for every odd $m$ in the range $1\le m\le v-7$,
 there exists a collection of $m$ perfect matchings and
 $t=\frac{v-1-m}2$ parallel classes of triangles, which together
 decompose $K_v$; this was proved in \cite{R}.
From such a system, for every $0<x<\frac{v-1-m}{4}$, we can take $2x$ parallel classes of triangles.
 Applying  Theorem \ref{theorem1} \cite{MT},
 also proved independently by Wilson (unpublished), we obtain $3x$ parallel classes of paths $P_3$. This gives a complex
$(K_2,P_3,K_3)$-URD$(v;m,3x,\frac{v-1-m}{2}-2x)$.
 For $n=12$, the statement  follows by  Proposition \ref{lemmaF2} below.
\end{proof}

\subsection{Small cases}

Obviously, the proofs of the necessary conditions that $v$ is a multiple of $6$,
 and that the equality $3m+4p+6t=3v-3$ must be satisfied by every $(K_2, P_3, K_{3})$-URD$(v;m,p,t)$,
 do not use the assumption $m\cdot p\cdot t\neq 0$.

\begin{proposition}
\label{lemmaF1}   There exists a\/ $(K_2, P_3, K_{3})$-URD\/$(6;m,p,t)$ if and only if\/ $(m,p,t)\in \{(5,0,0), (3,0,1),(1,3,0)\}$.
\end{proposition}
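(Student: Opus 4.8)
The plan is to prove the statement $(K_2,P_3,K_3)$-URD$(6;m,p,t)$ exists if and only if $(m,p,t)\in\{(5,0,0),(3,0,1),(1,3,0)\}$ by combining the divisibility and edge-counting constraints already established with a direct case analysis specific to $v=6$. First I would invoke the remark preceding the proposition: the necessary conditions that $v$ is a multiple of $6$ and that $3m+4p+6t=3v-3$ hold independently of the assumption $m\cdot p\cdot t\neq 0$. Plugging $v=6$ into condition $(ii)$ gives $3m+4p+6t=15$, and dividing through the corresponding edge equation by $v/6=1$ is the governing relation for the whole argument.

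\smallskip
\noindent\textbf{Enumerating the candidates.} From $3m+4p+6t=15$ with $m,p,t\ge 0$, I would first observe that reducing modulo $3$ forces $4p\equiv 0\pmod 3$, hence $p\equiv 0\pmod 3$; and reducing modulo $2$ forces $3m\equiv 1\pmod 2$, so $m$ is odd. Since $K_6$ has $\binom{6}{2}=15$ edges and a $K_3$-class uses $6$ edges, a $P_3$-class uses $4$ edges, and a matching uses $3$ edges, the totals are tightly bounded: at most two triangle-classes, at most three $P_3$-classes (in fact at most one block of $P_3$'s fills a class, and $p$ must be a multiple of $3$ so $p\in\{0,3\}$ since $p=6$ already forces $4\cdot 6=24>15$), and $t\in\{0,1,2\}$. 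Running through these few combinations, the only nonnegative integer solutions of $3m+4p+6t=15$ with $m$ odd and $3\mid p$ are exactly $(5,0,0)$, $(3,0,1)$, $(1,3,0)$, and $(1,0,2)$. The arithmetic thus leaves one spurious candidate, $(1,0,2)$, that must be ruled out by a structural (non-arithmetic) argument.

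\smallskip
\noindent\textbf{Eliminating $(1,0,2)$ and the main obstacle.} The heart of the proof is showing that $(1,0,2)$ is \emph{not} realizable, even though it satisfies $(i)$ and $(ii)$. A $(K_2,P_3,K_3)$-URD$(6;1,0,2)$ would require two edge-disjoint parallel classes of triangles in $K_6$, i.e.\ two disjoint resolutions of $K_6$ into $K_3$-factors, together with one perfect matching on the remaining $3$ edges. But a $K_3$-factor of $K_6$ is a partition of the $6$ vertices into two triangles, which uses $6$ edges; two edge-disjoint such factors would use $12$ edges and leave a $1$-regular graph (a perfect matching) on the final $3$ edges. The obstruction is that $K_6$ does \emph{not} admit two edge-disjoint triangle-factors: this is the classical fact that a Kirkman Triple System of order $6$ does not exist (equivalently, $K_6$ is not the union of two edge-disjoint $K_3$-factors, as its triangle-factors pairwise share edges). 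I expect this step to be the main obstacle, since it cannot be settled by counting alone and requires either the known nonexistence of a resolvable $(6,3,\cdot)$ triangle decomposition or an explicit short combinatorial check that any two triangle-factors of $K_6$ must overlap.

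\smallskip
\noindent\textbf{Verifying realizability of the survivors.} Finally I would exhibit the three admissible triples explicitly. The triple $(5,0,0)$ is precisely a $1$-factorization of $K_6$, which exists since $6$ is even. For $(1,3,0)$ I would take one perfect matching and decompose the remaining $12$ edges into three $P_3$-factors; and for $(3,0,1)$ I would take one triangle-factor (using $6$ edges) together with three perfect matchings on the remaining $9$ edges, noting $9/3=3$. Each of these is a direct small construction on six labelled vertices, so these cases present no difficulty once the constructions are written down, completing the characterization.
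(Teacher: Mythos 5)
Your proposal is correct and follows essentially the same route as the paper: enumerate the four nonnegative integer solutions of $3m+4p+6t=15$, eliminate $(1,0,2)$ because $K_6$ admits no two edge-disjoint triangle-factors, and realize $(5,0,0)$, $(3,0,1)$, $(1,3,0)$ by small constructions (the paper instead cites a $1$-factorization of $K_6$, a known $(K_2,P_3)$-URD$(6;1,3)$, and writes out $(3,0,1)$ explicitly, while citing Rees for the nonexistence step). One terminological note: the nonexistence you invoke is that of a \emph{Nearly} Kirkman Triple System NKTS$(6)$ (a resolvable triangle decomposition of $K_6$ minus a perfect matching), not a Kirkman Triple System of order $6$; your parenthetical reformulation --- that any two triangle-factors of $K_6$ must share an edge --- is exactly the right fact and is easily verified directly.
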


\begin{proof}
Putting $v=6$, the equation $3m+4p+6t=15$ has exactly four solutions $(m,p,t)$ over the nonnegative integers.
The case $(1,0,2)$ would correspond to an NKTS(6) which is known not to exist
\cite{R}. The case $(5,0,0)$ corresponds to a $1$-factorization of the complete  graph $K_{6}$ which is known to exist \cite{Lu}.
The case of $(1,3,0)$ is just the same as a $(K_2, P_3)$-URD$(6;1,3)$ that is known to exist \cite{GM2}. To see the existence for  $(3,0,1)$, consider  $V(K_{6})=\mathbb{Z}_{6}$ and the following classes:
$\{\{1,4\}, \{2,5\}, \{3,6\}\}$,  $\{\{1,5\}, \{2,6\}, \{3,4\}\}$, $\{\{1,6\}, \{2,4\}, \{3,5\}\}$,
$\{(1,2,3), (4,5,6)\}$.
\end{proof}

\begin{proposition}
\label{lemmaF2}   There exists a\/ $(K_2, P_3, K_{3})$-URD\/$(12;m,p,t)$ if and only if\/ $(m,p,t)\in \{(11,0,0), (9,0,1), (7,3,0), (7,0,2),(5,0,3), (5,3,1), (3,6,0),(3,3,2), (3,0,2),(1,6,1), (1,3,3)\}$.
\end{proposition}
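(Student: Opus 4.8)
The plan is to classify the feasible parameter triples arithmetically and then realize each of them by one of three devices: Rees's spectrum for $\{K_2,K_3\}$-decompositions, the triangle-to-path metamorphosis of Theorem~\ref{theorem1}, and an explicit hand construction for the two boundary cases that the first two devices cannot reach.

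First I would invoke the necessary conditions established just before Proposition~\ref{lemmaF1}: any $(K_2,P_3,K_3)$-URD$(12;m,p,t)$ satisfies $3m+4p+6t=33$ with $m$ odd and $p$ a multiple of $3$. Writing $p=3x$, this reduces to $m+4x+2t=11$, whose nonnegative solutions with $m$ odd number exactly twelve. Of these, the triple $(1,0,5)$ would be a Nearly Kirkman Triple System of order $12$ and therefore does not exist \cite{R}; the remaining eleven are precisely the ones to be realized. So the proof splits into a single non-existence remark and eleven constructions.

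For the triples with a sufficiently rich triangle part I would follow the scheme of Theorem~\ref{theorem11}. Each candidate with $p=0$ and $m\ge 3$ is a $\{K_2,K_3\}$-uniformly resolvable decomposition of $K_{12}$ and exists by Rees~\cite{R}, the extreme case $(11,0,0)$ being a plain $1$-factorization \cite{Lu}. From any such decomposition with $m\ge 3$, applying Theorem~\ref{theorem1} to $2x$ of its triangle-classes replaces them with $3x$ parallel classes of $P_3$ and so produces every remaining candidate with $m\ge 3$ and $p>0$: for instance $(7,3,0)$ arises from $(7,0,2)$, $(5,3,1)$ from $(5,0,3)$, and both $(3,3,2)$ and $(3,6,0)$ from the base with $m=3$ and four triangle-classes.

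The crux is the pair $(1,3,3)$ and $(1,6,1)$. Neither can be produced by the scheme above, since the only $m=1$, $p=0$ scaffold is the non-existent $(1,0,5)$ and Theorem~\ref{theorem1} never alters the number $m$ of matchings. Here I would exhibit an explicit $(K_2,P_3,K_3)$-URD$(12;1,3,3)$ on the vertex set $\mathbb{Z}_{12}$ --- one perfect matching, three triangle-classes and three $P_3$-classes, together exhausting all $66$ edges --- and then obtain $(1,6,1)$ from it by converting two of its three triangle-classes by Theorem~\ref{theorem1}. I expect this hand construction to be the main obstacle: because the usual triangle-only base is unavailable at $v=12$, the matching, the triangles and the paths must be interlocked simultaneously, so the decisive and purely computational step is to list the classes and check that they partition $E(K_{12})$ into genuine parallel classes.
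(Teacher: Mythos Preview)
Your plan is correct and matches the paper's proof essentially step for step: enumerate the twelve admissible triples, discard $(1,0,5)$ as the non-existent NKTS$(12)$, realize the $p=0$ cases via Rees and the $1$-factorization, derive the $m\ge 3$, $p>0$ cases from these by Theorem~\ref{theorem1}, give an explicit $(1,3,3)$ design on $\mathbb{Z}_{12}$, and obtain $(1,6,1)$ from it by one more application of Theorem~\ref{theorem1}. The paper supplies precisely the explicit $(1,3,3)$ construction you anticipate as the crux; note also that the entry ``$(3,0,2)$'' in the statement is a typo for $(3,0,4)$, consistent with your arithmetic and with the paper's own proof.
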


\begin{proof}
Checking the nonnegative integer solutions of $3m+4p+6t=33$,
 the case of $(1,0,5)$ would correspond to an NKTS(12) which is known not to exist
\cite{R}. The case of $(11,0,0)$ corresponds to a 1-factorization of the complete graph $K_{12}$ that is known to exist \cite{CD}.
The result for the cases $(9,0,1)$, $(7,0,2)$, $(5,0,3)$, $(3,0,4)$ follows by \cite{R}. Applying  Theorem \ref{theorem1} to  $(7,0,2)$, $(5,0,3)$, $(3,0,4)$, we obtain the existence for $(7,3,0)$, $(5,3,1)$, $(3,3,2)$, and $(3,6,0)$.  The existence for the case $(1,3,3)$ is shown by the following construction. Let $V(K_{12})=\mathbb{Z}_{12}$, and consider the following parallel classes:
 \begin{itemize}
   \item matching: $\{\{1,6\}, \{2,4\}, \{3,0\},\{5,11\}, \{7,9\}, \{8,10\}\}$;
   \item paths: $\{\{5,0,11\}, \{8,1,7\}, \{9,2,3\},\{6,4,10\}\}$, $\{\{1,3,8\},\{7,5,10\},\{4,9,0\}, \{2,11,6\}\},\\\{ \{ 0,6,5\},\{2,7,4\},\{9,8,11\},\{3,10,1\}\}$;
   \item triangles: $\{\{0,1,2), \{3,4,5\}, \{6,7,8\},\{9,10,11\} \}$,  $\{\{0,4,8\}, \{3,7,11\}, \{2,6,10\},\{1,5,9\} \}$,\\
$\{\{0,7,10\}, \{3,6,9\}, \{2,5,8\}, \{1,4,11\} \}$.
 \end{itemize}
Finally, we apply Theorem \ref{theorem1} to the case $(1,3,3)$ and infer that  a $(K_2, P_3, K_{3})$-URD$(12;1,6,1)$ exists, too.
\end{proof}

\section{The spectrum for $\cF=\{K_2,P_4,C_4\}$}
\label{sec:MPC}

In this section we consider complex uniformly resolvable decompositions of the complete graph $K_v$ into $m$ parallel $1$-factors, $p$ parallel classes of $4$-paths, and  $c$ parallel classes of $4$-cycles.
The current problem is to determine the set of feasible
 triples $(m,p,c)$ such that $m\cdot p\cdot c \ne 0$, for which
 there exists a complex $(K_2,P_3,C_4)$-URD$(v;m,p,c)$.
The case of $\{P_4, C_4\}$, that is $m=0$, will be discussed in Section \ref{sec:PC}.

\begin{theorem}
 \label{theorem12}
 The necessary and sufficient conditions for the existence of a
  complex\/ $(K_2,P_4,C_4)$-URD\/$(v;m,p,c)$ are:
  \begin{description}
   \item[$(i)$] $v\ge 8$ and\/ $v$ is a multiple of\/ $4$;
   \item[$(ii)$] $2m + 3p + 4c = 2v-2$.
  \end{description}
Moreover, the parameters\/ $m,p,c$ are in the following ranges:
   \begin{description}
   \item[$(iii)$] $1\le c\le \frac v2 - 3$,

    \qquad \ $1\le m\le v-6$,

    \qquad \ $2\le p\le 2\, \lfloor \frac {v-4}3 \rfloor$;
   \item[$(iv)$]
    $p$ is even; and if\/ $p\equiv 2 ~ (\mbox{\rm mod } 4)$, then also\/
    $m$ is even.
   \end{description}

\end{theorem}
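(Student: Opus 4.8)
The plan is to separate necessity from sufficiency and, for the latter, to reduce the whole existence question to the already-known spectrum of $\{K_2,C_4\}$-factorizations combined with the local metamorphosis of Theorem~\ref{theorem3}.

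For necessity I would argue exactly as in the proof of Theorem~\ref{theorem11}. The presence of $P_4$- and $C_4$-factors, each a spanning union of $4$-vertex blocks, forces $4\mid v$, giving $(i)$; counting edges, a matching carries $v/2$ edges, a $P_4$-class carries $3v/4$, and a $C_4$-class carries $v$, so that $m\cdot\frac v2+p\cdot\frac{3v}{4}+c\cdot v=\binom v2$, and dividing by $v/4$ yields $(ii)$. Reducing $(ii)$ modulo $2$ shows that $3p$, hence $p$, is even; writing $p=2y$ and reducing $m+3y+2c=v-1$ modulo $2$ (with $v-1$ odd) gives that $m+y$ is odd, which is precisely the parity statement $(iv)$. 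Substituting the complex lower bounds $m,y,c\ge 1$ into $m+3y+2c=v-1$ then produces the three ranges in $(iii)$, and the same three inequalities force $2v-2\ge 12$, i.e.\ $v\ge 8$.

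For sufficiency, write $p=2y$ with $y\ge 1$. The key observation is that $(m+y)+2(c+y)=m+3y+2c=v-1$, that $m+y$ is odd (shown above), and that $c+y\ge 2$ while $3\le m+y\le v-5$; hence the pair $(m+y,\,c+y)$ lies in the admissible range for $\{K_2,C_4\}$-factorizations. Invoking Rees's theorem \cite{R}, I would start from a $(K_2,C_4)$-URD$(v;\,m+y,\,c+y)$ and then apply Theorem~\ref{theorem3} exactly $y$ times: in each step I take one perfect matching together with one $C_4$-class not yet used, and replace the union of this matching-cycle pair by two parallel classes of $P_4$. Since the base contains $m+y\ge y$ matchings and $c+y>y$ cycle-classes, an unused pair is always available; after the $y$ steps exactly $m$ matchings and $c$ cycle-classes survive, while $2y=p$ new $P_4$-classes have been created, giving the desired complex $(K_2,P_4,C_4)$-URD$(v;m,p,c)$.

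I expect the main obstacle to sit entirely at the boundary: guaranteeing that the base $(K_2,C_4)$-URD$(v;m+y,c+y)$ exists for \emph{every} admissible triple, i.e.\ that the needed pairs avoid any sporadic gaps in the $\{K_2,C_4\}$-spectrum and that the smallest order is covered. Because $y\ge 1$ and $c\ge 1$ keep $m+y$ away from both extremes $1$ and $v-1$, Rees's theorem should cover all interior cases; the remaining work is to dispose of $v=8$ (whose only complex triple is $(m,p,c)=(2,2,1)$, obtained from a $(K_2,C_4)$-URD$(8;3,2)$ by one application of Theorem~\ref{theorem3}) and of any exceptional small parameters by explicit constructions, in the spirit of the small-case propositions of Section~\ref{sec:MPT}. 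Should a required base happen to be missing, Theorems~\ref{theorem2} and~\ref{theorem4} supply alternative metamorphoses---two $C_4$-classes into two $P_4$-classes plus a matching, and three $C_4$-classes into four $P_4$-classes---which let one redistribute the parameters and still reach the same triple.
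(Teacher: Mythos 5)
Your necessity argument coincides with the paper's (the edge count $\frac{mv}{2}+\frac{3pv}{4}+cv=\binom v2$, divisibility by $4$, the parity of $m+y$ where $p=2y$, and the bounds obtained from $m,y,c\ge 1$), and it is correct. Your sufficiency argument, however, takes a genuinely different route. The paper is self-contained: it constructs a single base decomposition of $K_v$ into one perfect matching and $\frac{v-2}{2}$ parallel classes of $C_4$ (by blowing up a $1$-factorization of $K_{v/2}$, doubling each vertex), then turns $C_4$-classes into $P_4$-classes via Theorems~\ref{theorem2} and \ref{theorem4} --- with a case split on the parity of $x=p/2$ --- and finally splits each leftover $C_4$-class into two perfect matchings to reach the required value of $m$. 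You instead start from a $(K_2,C_4)$-URD$(v;m+y,c+y)$ taken from the literature and apply Theorem~\ref{theorem3} exactly $y$ times. Your route is cleaner where the paper is more laborious: it needs only one metamorphosis lemma and no parity case analysis, and it mirrors how the paper itself proves Theorem~\ref{theorem13}. What it gives up is self-containedness.

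That outsourced base is the one weak point, and you rightly flag it as such, but "Rees's theorem should cover all interior cases" is not a proof. Indeed, the reference \cite{R} (which the paper's introduction also cites loosely for this purpose) concerns uniformly resolvable designs with block sizes two and \emph{three}, i.e.\ matchings and triangles; it does not establish the $\{K_2,C_4\}$ spectrum. Fortunately the gap closes with exactly the observation the paper exploits: from the blow-up decomposition (one matching plus $\frac{v-2}{2}$ $C_4$-classes), splitting any $k$ of the $C_4$-classes into two perfect matchings each realizes every pair $(r,s)$ with $r$ odd and $r+2s=v-1$; in particular the pair $(m+y,c+y)$ you need always exists, there are no sporadic exceptions (unlike the nonexistent NKTS$(6)$ and NKTS$(12)$ in the triangle setting), and your separate treatment of $v=8$ becomes unnecessary. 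With that short substitution in place of the citation, your proof is complete and correct.
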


\begin{proof}
 We first show that the conditions are necessary.
Since $K_v$ has a $C_4$-factor, $v$ must be a multiple of~$4$. 
Further, as a $C_4$-, $K_2$-, and $P_4$-factor respectively cover exactly $v$, $v/2$, and $3v/4$ edges, in a $(K_2,P_4,C_4)$-URD$(v;m,p,c)$ we have
 $$
   \frac{mv}2 + \frac {3pv}4 + {cv} = {v\choose 2} .
 $$
This equality directly implies $(ii)$ and we may also conclude that $p$ is even and, further, if $p\equiv 2 ~ (\mbox{\rm mod } 4)$, then $m$ must be even as well.
Putting $p=2x$ we obtain $$m+3x+2c=v-1.$$
By our condition, all the three types of parallel classes are present in the decomposition, i.e.\ we have $c\ge 1$, $m \ge 1$, and $x \ge 1$. These, together with the equality above, imply the necessity of  $(iii)$.

\medskip

 Next we prove the sufficiency of $(i)$--$(ii)$.
We first take a 1-factorization of $K_{v/2}$ into $v/2-1$
 perfect matchings, which exists because $v$ is a multiple of 4.
Now, replace each vertex of $K_{v/2}$ with two non-adjacent vertices. This blow-up results in  $v/2-1$ parallel classes of $C_4$ inside $K_v$, and
 the missing edges can be taken as a perfect matching.
Let $C$ be the set of the parallel classes of $C_4$ and $x$ be a nonnegative integer such that
$0<x\leq \lfloor\frac{v-4}{3}\rfloor$. The construction splits into two cases depending on the parity of $x$.

If $x$ is even, take $\frac{3x}{2}$ parallel classes from $C$. Applying Theorem \ref{theorem4}, we transform the $\frac{3x}{2}$
parallel classes of $C_4$ into $2x=p$ parallel classes of paths $P_4$. For any given $y=c$ in the range $0<y \leq \lfloor\frac{v-2}{2}-\frac{3x}{2}\rfloor$,
keep $y$ classes of $C_4$ and transform the remaining $\frac{v-2}{2}-\frac{3x}{2}-y$ classes of $C_4$ into $2(\frac{v-2}{2}-\frac{3x}{2}-y)=m-1$ classes of 1-factors. In this way we obtain a complex $(K_2,P_4,C_4)$-URD$(v; v-3x-2y-1,2x,y)$.

If $x$ is odd, take $\frac{3(x-1)}{2}+2$ parallel classes from $C$. By Theorems~\ref{theorem2} and \ref{theorem4}, we can transform the $\frac{3(x-1)}{2}+2$ parallel classes of $C_4$ into $2x=p$ parallel classes of paths $P_4$ and a 1-factor.
For any given $y=c$ in the range $0<y \leq \frac{v-2}{2}-\frac{3(x-1)}{2}-2$, keep $y$ classes of $C_4$ and transform the remaining $\frac{v-2}{2}-\frac{3(x-1)}{2}-2-y$ classes of $C_4$ into $2(\frac{v-2}{2}-\frac{3(x-1)}{2}-2-y)=m-2$ classes of 1-factors. In this way, we obtain a complex $(K_2,P_4,C_4)$-URD$(v; v-3x-2y-1,2x,y)$.

The result, for every  $v\equiv 0\pmod{4}$, $0< x\leq \lfloor\frac{v-4}{3}\rfloor$, and $0< y\leq \lfloor\frac{v-2}{2}-\frac{3x}{2}\rfloor$, is a uniformly resolvable decomposition of $K_v$ into into $v-1-3x-2y=m$ classes containing only copies
of 1-factors, $2x=p$ classes containing only copies of paths $P_4$, and  $y=c$ classes containing only copies of 4-cycles $C_4$.
This finishes the proof of the theorem.
\end{proof}

\section{The spectrum for $\cH=\{P_4,C_4\}$}
\label{sec:PC}

Finally, we consider complex uniformly resolvable decompositions of the complete graph $K_v$ into $p$ classes containing only copies of paths $P_4$ and  $c$ classes containing only copies of 4-cycles $C_4$.
\begin{theorem}
 \label{theorem13}
 The necessary and sufficient conditions for the existence of a
  complex\/ $(P_4,C_4)$-URD\/$(v;p,c)$ are:
  \begin{description}
   \item[$(i)$] $v\ge 8$ and\/ $v$ is a multiple of\/ $4$;
   \item[$(ii)$] $3p+4c = 2v-2$.
  \end{description}
\end{theorem}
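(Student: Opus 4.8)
The plan is to mirror the two-phase structure used for Theorems~\ref{theorem11} and \ref{theorem12}: establish necessity by an edge-count reinforced with a parity observation, and then prove sufficiency by starting from a canonical decomposition of $K_v$ into $C_4$-factors plus one perfect matching and converting the right number of classes into $P_4$-factors via the metamorphosis results of Section~\ref{sec:2}. For necessity, the existence of a $C_4$-factor forces $v\equiv 0\pmod 4$; counting edges (a $P_4$-factor covers $3v/4$ edges, a $C_4$-factor covers $v$ edges) gives $\frac{3pv}{4}+cv=\binom{v}{2}$, which simplifies to $(ii)$. I would then note that, since $v\equiv 0\pmod 4$, the right-hand side satisfies $2v-2\equiv 2\pmod 4$ while $4c\equiv 0\pmod 4$, so $3p\equiv 2\pmod 4$ and hence $p\equiv 2\pmod 4$. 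In particular every complex decomposition has $p\ge 2$, whence $(ii)$ together with $c\ge 1$ yields $3\cdot 2+4\cdot 1\le 2v-2$, i.e.\ $v\ge 6$; combined with $4\mid v$ this gives $v\ge 8$, which is $(i)$.

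For sufficiency I would reuse the blow-up construction: take a $1$-factorization of $K_{v/2}$ (which exists since $v/2$ is even) and replace each vertex by two non-adjacent vertices, so that each $1$-factor of $K_{v/2}$ becomes a $C_4$-factor of $K_v$. This produces $v/2-1$ parallel classes of $C_4$ together with one perfect matching, formed by the edges joining the blown-up pairs. Writing a feasible value as $p=4k+2$ (admissible since $p\equiv 2\pmod 4$), the value of $c$ forced by $(ii)$ is $c=v/2-2-3k$. I would then apply Theorem~\ref{theorem3} once to merge the perfect matching with a single $C_4$-class into two $P_4$-classes, and apply Theorem~\ref{theorem4} to $k$ pairwise disjoint triples of the remaining $C_4$-classes to obtain $4k$ further $P_4$-classes; the untouched $C_4$-classes then number exactly $(v/2-1)-1-3k=c$. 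This produces a complex $(P_4,C_4)$-URD$(v;4k+2,v/2-2-3k)$, and as $k$ runs over $0\le k\le\lfloor (v-6)/6\rfloor$ (the range imposed by $c\ge 1$) these pairs exhaust all $(p,c)$ satisfying $(i)$--$(ii)$ with $p,c\ge 1$.

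Since the structural work is entirely carried by Theorems~\ref{theorem3} and \ref{theorem4}, I do not anticipate a genuine obstacle; the only point requiring care is the bookkeeping that the single $C_4$-class consumed by Theorem~\ref{theorem3}, the $k$ triples consumed by Theorem~\ref{theorem4}, and the $c$ retained classes are mutually disjoint and together account for all $v/2-1$ available $C_4$-factors (which holds because $1+3k\le v/2-1$ whenever $c=v/2-2-3k\ge 1$), and that the parametrization $p=4k+2$ meets every admissible pair exactly once.
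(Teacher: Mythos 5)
Your proposal is correct and follows essentially the same route as the paper: the same edge-count/parity necessity argument (the paper simply cites the corresponding part of Theorem~\ref{theorem12}, where $m>0$ was not used), and the same sufficiency construction — the blow-up of a $1$-factorization of $K_{v/2}$ into $\frac{v-2}{2}$ $C_4$-classes plus a matching, one application of Theorem~\ref{theorem3} and $k$ applications of Theorem~\ref{theorem4}, with the identical parametrization $p=4k+2$, $c=\frac{v-4}{2}-3k$, $0\le k\le\lfloor\frac{v-6}{6}\rfloor$.
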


\begin{proof}
Necessity is a consequence of Theorem \ref{theorem12}, since we did not need to assume $m>0$ in that part of its proof.
Turning to sufficiency,
the condition $3p+4c=2v-2$ implies that $3p\equiv 2v-2\pmod{4}$. This gives $ p=2+4x$ and $c=\frac{v-4}{2}-3x$. 
For a construction, we start with a decomposition of $K_v$ into a perfect matching $F$ and $\frac{v-2}{2}$ parallel classes of $C_4$ as in the proof of Theorem \ref{theorem12}. By Theorem \ref{theorem3}, we can transform one class of $C_4$ and $F$ into two classes of paths $P_4$. Then, by Theorem \ref{theorem4}, we transform $3x$ parallel classes of $C_4$ into $4x$ parallel classes of $P_4$. The result, for every $x$ such that $0\leq x\leq \lfloor\frac{v-6}{6}\rfloor$,  is a uniformly resolvable decomposition of $K_v$ into $2+4x$ classes containing only copies
of paths $P_4$ and $\frac{v-4}{2}-3x$  classes containing only copies of 4-cycles $C_4$.
This completes the proof.
\end{proof}

\section{Conclusion}
Combining Theorems \ref{theorem11}, \ref{theorem12}, and \ref{theorem13}, we obtain the main result of this paper.

\begin{theorem}
 \label{theorem14}  ~~~
\begin{description}
   \item[$(i)$] A complex\/ $(K_2, P_3, K_3)$-URD\/$(v; m,p,t)$  exists if and only if\/ $v\ge 12$,\/ $v$ is a multiple of\/ $6$, and\/ $3m + 4p + 6t = 3v-3$.
   \item[$(ii)$] A complex\/ $(K_2, P_4, C_4)$-URD\/$(v; m,p,t)$ exists if and only if\/ $v\geq 8$,\/ $v$ is a multiple of\/ $4$, and\/ $2m + 3p + 4t = 2v-2$.
   \item[$(iii)$] A complex\/ $(P_4, C_4)$-URD\/$(v; p,t)$ exists if and only if\/ $v\geq 8$,\/ $v$ is a multiple of\/ $4$, and\/ $3p + 4t = 2v-2$.
 \end{description}
\end{theorem}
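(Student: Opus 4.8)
The plan is to obtain Theorem~\ref{theorem14} as a direct consolidation of Theorems~\ref{theorem11}, \ref{theorem12}, and \ref{theorem13}. The only substantive point is to check that the supplementary range and parity conditions stated in the earlier theorems are already forced by the single displayed linear equation, the divisibility of $v$, and the positivity $m,p,t\ge 1$ that is inherent in the word \emph{complex}. Once this redundancy is established, each part of Theorem~\ref{theorem14} becomes a verbatim reformulation of the corresponding earlier result.

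Part~(iii) requires no work at all: Theorem~\ref{theorem13} already asserts precisely the stated equivalence, with no auxiliary constraints, so I would simply invoke it.

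For part~(i) I would start from Theorem~\ref{theorem11} and show that its condition~(iii) follows from (i), (ii), and positivity. Reducing $3m+4p+6t=3v-3$ modulo $3$ gives $p\equiv 0\pmod 3$, and reducing modulo $2$ (using that $6\mid v$ makes $v-1$ odd) gives $m$ odd; writing $p=3x$ converts the equation into $m+4x+2t=v-1$. Each upper bound in (iii) then follows by replacing the two variables other than the one being bounded by their minimum value $1$: thus $m\le v-7$ and $t\le v/2-3$ emerge as clean integers, while the bound on $x$ becomes $x\le(v-4)/4$, so that $p=3x\le 3\lfloor(v-4)/4\rfloor=3\lfloor v/4-1\rfloor$, the floor reflecting that $(v-4)/4$ need not be integral. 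Hence the numerical hypotheses of Theorem~\ref{theorem14}(i) are equivalent to the full hypotheses of Theorem~\ref{theorem11}.

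For part~(ii) the argument runs in parallel from Theorem~\ref{theorem12}. Modulo $2$ the equation $2m+3p+4c=2v-2$ forces $p$ even; writing $p=2x$ gives $m+3x+2c=v-1$, whose reduction modulo $2$ (using $4\mid v$) shows that $x$ odd forces $m$ even, which is exactly clause~(iv). The ranges in (iii) again come from setting the other positive variables to $1$, with the rounding $2c\le v-5\Rightarrow c\le v/2-3$ handled via the integrality of $c$ and the evenness of $v$. I do not anticipate a genuine obstacle here, since the statement is essentially bookkeeping; the only point demanding care is precisely this matching of the rounded closed-form bounds, which must be verified so that the ranges in Theorems~\ref{theorem11} and~\ref{theorem12} reproduce exactly the values claimed.
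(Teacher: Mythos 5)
Your proposal is correct and takes essentially the same route as the paper, whose entire proof of Theorem~\ref{theorem14} is to combine Theorems~\ref{theorem11}, \ref{theorem12}, and \ref{theorem13}, since in each of those the displayed equation together with the divisibility of $v$ already constitutes the stated necessary and sufficient condition. Your additional check that the ``moreover'' clauses (the ranges and parity conditions) follow from the main equation and the positivity of the parameters is accurate, but it only reproduces derivations already carried out inside the necessity parts of the proofs of Theorems~\ref{theorem11} and \ref{theorem12}, so no new argument is actually required.
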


\medskip
Concerning the local metamorphosis studied in Section~\ref{sec:2}, we pose the following conjecture as a common generalization of Theorems~\ref{theorem1} and \ref{theorem4}.

\begin{conjecture}
  \label{conjecture} The union of\/ $k-1$ parallel classes of\/ $C_k$ is
  decomposable into\/ $k$ parallel classes of\/ $P_k$.
\end{conjecture}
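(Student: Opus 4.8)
The plan is to generalize the $GF(2)$ ``local metamorphosis'' technique of Section~\ref{sec:2}. Let $G$ be the union of the $k-1$ parallel classes of $C_k$; it is a $2(k-1)$-regular graph on $v$ vertices with $(k-1)v$ edges, and it contains $(k-1)v/k$ cycles in total. A $P_k$-factor has $v/k$ paths and hence $(k-1)v/k$ edges, so the $k$ target factors together account for exactly $(k-1)v$ edges. First I would reduce the problem to selecting a single distinguished factor: choose a set $M$ of edges containing exactly one edge from each of the $(k-1)v/k$ cycles, subject to the requirement that $M$ itself be a $P_k$-factor. Since deleting one edge from a $C_k$ turns it into a $P_k$, removing $M$ from $G$ automatically turns each of the original $k-1$ cycle-classes into a $P_k$-class; together with $M$ this yields the desired $k$ parallel classes of $P_k$. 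The count $|M| = (k-1)v/k$ matches the size of a $P_k$-factor, so the bookkeeping is consistent. This is the exact analogue of extracting the perfect matching in Theorem~\ref{theorem2} (the case $k=4$), except that for general $k$ the transversal object must be a path-factor rather than a matching.

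To locate such an $M$ I would set up, as in the proofs of Theorems~\ref{theorem2}--\ref{theorem4}, a Boolean variable for each edge of $G$ and a system of $GF(2)$ equations: one $C$-equation per cycle forcing the sum of its $k$ edge-variables to be $1$, together with vertex/parity equations designed to constrain the degrees of $M$. As before, one would prove the system is contradiction-free by showing that every dependent subcollection of equations has even cardinality, pass to a ``basic'' solution in which each $C$-equation carries exactly one~$1$, and read off $M$ as the selected edges. This parity machinery can be arranged to guarantee that $M$ has maximum degree $2$, i.e.\ that $M$ is a disjoint union of paths and cycles.

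A second source of confidence is the Linear Arboricity Conjecture: a $d$-regular graph is conjectured, and known in many regular cases, to decompose into $\lceil (d+1)/2\rceil$ linear forests. For $d = 2(k-1)$ this number is exactly $k$, which matches our target count precisely and already decomposes $G$ into $k$ linear forests. Thus an alternative route is to start from such a linear-forest decomposition and then \emph{regularize} it by a sequence of local edge-exchanges between the forests, splitting over-long paths, merging short ones, and eliminating any stray cycles, until every component of every forest is a spanning collection of paths on exactly $k$ vertices.

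The hard part, in either route, will be controlling the \emph{lengths} of the components. The $GF(2)$ method and the linear arboricity results both deliver the correct degree sequence and the correct number of parts, but neither forbids short paths, over-long paths, or even closed cycles inside $M$; forcing every component to be precisely $P_k$ (spanning, acyclic, uniform length $k$) is exactly the rigidity that parity arguments cannot detect. I expect the resolution to require an augmentation/exchange argument tailored to break and rejoin paths while preserving the one-edge-per-cycle and degree constraints, most plausibly organized as an induction --- on $v$ for fixed $k$, with Theorems~\ref{theorem1} and \ref{theorem4} serving as the base cases $k=3$ and $k=4$. Producing a single exchange scheme that works uniformly across all $k$ is the principal obstacle, and is presumably why the statement is still only a conjecture.
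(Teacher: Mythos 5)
The statement you set out to prove is Conjecture~\ref{conjecture}; the paper offers \emph{no} proof of it, but poses it as an open problem precisely because the technique of Section~\ref{sec:2} is not known to extend beyond $k=4$ (the settled cases being $k=3$, Theorem~\ref{theorem1}, and $k=4$, Theorem~\ref{theorem4}). Your proposal is likewise not a proof, as you concede in your closing paragraph. What is sound in it: the counting is right, and the reduction is correct --- a set $M$ consisting of exactly one edge from each of the $(k-1)v/k$ cycles and forming a $P_k$-factor would immediately give the conjecture, and this is exactly what the composition of Theorems~\ref{theorem2} and \ref{theorem3} produces when $k=4$. Your diagnosis that parity arguments deliver degree information but cannot control component lengths is also the right one.

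However, two of your intermediate claims are false, so the gap is wider than you acknowledge. First, the assertion that ``this parity machinery can be arranged to guarantee that $M$ has maximum degree $2$'' is untrue: $GF(2)$ equations constrain only the \emph{parity} of degrees, never their magnitude. In Theorem~\ref{theorem2} the degree bound came not from parity but from structure: each vertex lies on just two cycles and a basic solution selects at most one edge per cycle, whence at most two selected edges per vertex, and parity then forces exactly one. In your setting a vertex lies on $k-1$ cycles, so the same argument gives only degree at most $k-1$ of prescribed parity. Worse, a $P_k$-factor with $k\ge 3$ has endpoints of degree $1$ and internal vertices of degree $2$ --- vertices of \emph{both} parities --- so one cannot even write down the vertex equations without deciding in advance which $2v/k$ vertices are to be endpoints; that choice is essentially the whole problem. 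Second, the switching step used to reach a basic solution breaks down for $k\ge 5$: flipping all $k$ variables of a $C$-equation changes its own right-hand side by $k \pmod 2$, so for odd $k$ the switch destroys the very equation it is applied to; and for even $k\ge 6$ it replaces $j$ selected edges by $k-j$, so a cycle carrying, say, $j=3$ ones when $k=6$ can never be brought down to exactly one. Thus the $GF(2)$ route collapses structurally for general $k$, not merely on the question of lengths. The linear-arboricity route inherits the same defect: it yields $k$ linear forests, but nothing forces every component to be a spanning collection of paths on exactly $k$ vertices, and the proposed ``regularization by local exchanges'' is entirely unspecified. In short, your write-up is a reasonable research plan that correctly explains why the statement remains a conjecture, but it proves nothing beyond $k=4$, and its claims about what the parity method delivers for general $k$ are overstated.
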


\end{document}